\documentclass[reqno, 12pt]{amsart}
\usepackage[body={7in,9.5in},top=1in, left=0.8in ]{geometry}
\usepackage{enumerate}
\usepackage{enumitem}
\usepackage{cancel}
\usepackage{enumitem}
\usepackage{amssymb}
\usepackage{microtype}
\usepackage{comment}

\newtheorem{theorem}{Theorem}[section]

\newtheorem{lem}[theorem]{Lemma}

\newcommand{\N}{\mathbb N}

\newcommand{\C}{\mathbb C}

\newcommand{\R}{\mathbb R}

\newcommand{\K}{\mathbb K}
\newcommand{\Q}{\mathbb Q}

\newcommand{\ov}[1]{\bar{#1}}
\newcommand{\ord}{\textup{ord}}

\usepackage{lipsum}
\newcommand\blfootnote[1]{
    \begingroup
    \renewcommand\thefootnote{}\footnote{#1}
    \addtocounter{footnote}{-1}
    \endgroup
}

\author[D.Beck]{Dominik Beck$^*$}

\address[D.Beck]{Faculty of Mathematics and Physics\\
  Charles University\\
 Prague\\
 Czechia}
	
\email[D.Beck]{beckd@karlin.mff.cuni.cz}

\author[P.Ma\'ckowiak]{Piotr Ma\'ckowiak}

\address[P.Ma\'ckowiak]{Department of Nonlinear Analysis and Applied Topology\\
  Faculty of Mathematics and Computer Science\\
  Adam Mickiewicz University\\
  Uniwersytetu Pozna\'nskiego 4\\
  61-614 Pozna\'n\\
  Poland}
	
\email[P.Ma\'ckowiak]{piotr.mackowiak@amu.edu.pl}

\begin{document}
\title{Non-external Proofs of Lagrange Inversion Formula}
\subjclass[2020]{Primary: 05A15; Secondary: 13F25, 05A10}
\keywords{Lagrange Inversion Formula, LIF, Formal Power Series}
\begin{abstract}
The goal of the paper is to present two simple proofs of the Lagrange Inversion Formula for formal power series. Both proofs are non-external in the sense that they use concepts that do not go beyond the scope of basic tools of formal power series analysis. 
\end{abstract}
\maketitle
\blfootnote{$^*$Dominik Beck was supported by the Charles University, project GA UK No. 71224 and by Charles University Research Centre program No. UNCE/24/SCI/022.}

\section{Introduction} 
In a recent paper of Surya and Warnke \cite{surya2023lagrange}, a simple inductive proof of the Lagrange Inversion Formula (LIF) is presented. Our paper offers additional two - as we believe - simple proofs of LIF. In Subsection \ref{proof1} we present the first proof which is motivated by the inductive approach of Surya and Warnke. The proof we give is also by induction but the inductive steps are based on interplay two equivalent forms of LIF (see Subsect. \ref{proof1} for details).

Subsection \ref{proof2} contains the second proof which is partially motivated by but different from the algebraic proof of LIF presented in \cite[First Proof of Theorem 5.4.2, p.38]{Stanley}. The difference here is that we do not go beyond the calculus of formal powers series while the more general proof in Stanley's book \cite{Stanley} relies on the notion of Laurent formal series. 

As the aforementioned paper \cite{surya2023lagrange} contains a very nice introduction to the Lagrange Inversion Formula, basic definitions and properties of formal power series calculus, we refer the reader to that paper for applications, motivation and literature on LIF.  

We are going to prove the following theorem which is an equivalent formulation of Theorem 1 from \cite{surya2023lagrange}.
\begin{theorem}\label{thm:function}[functional form]
Let $\phi(x)=\sum_{n=0}^\infty\phi_nx^n$ and $f(x)=\sum_{n=0}^\infty f_nx^n$ be formal power series such that $\phi_0\neq 0$, $f_0=0$, $f_1\neq 0$ and $\ov{f}(x) = x \phi(\ov{f}(x))$, where $\ov{f}(x)$ is the compositional inverse of $f(x)$. For any formal power series $g(x)=\sum_{n=0}^\infty g_nx^n$ and any positive integer $n$,
\begin{equation}\label{LIF_Functional}
    [x^n] g(\ov{f}(x)) = \tfrac{1}{n} [x^{n-1}] g'(x) (x/f(x))^n = \tfrac{1}{n} [x^{n-1}] g'(x) \phi^n(x).
\end{equation}
\end{theorem}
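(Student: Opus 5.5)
First, the two right-hand expressions in \eqref{LIF_Functional} agree. Put $y=\ov{f}(x)$ in $\ov{f}(x)=x\phi(\ov{f}(x))$ and use $x=f(y)$. This gives $y=f(y)\phi(y)$, so $\phi(y)=y/f(y)$ as formal power series; this quotient is well defined because $f_1\neq 0$. Hence $(x/f(x))^n=\phi^n(x)$. It therefore suffices to prove $[x^n]g(\ov f(x))=\tfrac1n[x^{n-1}]g'(x)\phi^n(x)$.

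Both sides are linear in $g$, and the constant term $g_0$ contributes nothing to either side since $n\ge1$. For fixed $n$, only the coefficients $g_1,\dots,g_n$ matter on both sides, because $\ov f(x)$ has order one. So it is enough to take $g(x)=x^k$ with $k\ge1$. The claim becomes
\[
[x^n]\,\ov f(x)^k=\tfrac{k}{n}[x^{n-k}]\phi^n(x).
\]

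I would prove this coefficient identity in the following steps.

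\begin{enumerate}[label=(\roman*)]
\item Write $h=\ov f$, so that $h=x\phi(h)$, and differentiate. We get $h'=\phi(h)+x\phi'(h)h'$, hence $h'(1-x\phi'(h))=\phi(h)$.
\item Multiply by $k h^{k-1}$. Using $\phi(h)=h/x$, the left side is $(h^k)'$ and the right side is $k h^k/x$ divided by $1-x\phi'(h)$. This gives
\[
x(h^k)'=\frac{k\,h^k}{1-x\phi'(h)}.
\]
Extracting $[x^n]$ turns the left side into $n[x^n]h^k$.
\item To evaluate the right side, expand $1/(1-x\phi'(h))$ as a geometric series. Then replace $h^k$ by $x^k\phi(h)^k$, and more generally $x^j$ by $h^j\phi(h)^{-j}$, to rewrite everything as a power series in $h$. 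Extracting $[x^m]F(h)$ for a series $F$ then needs its own inductive argument, which is unpleasant.
\end{enumerate}

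That last step is where an external tool like residues would normally enter, and it is the main obstacle. Since the paper explicitly avoids residues, I would instead argue \emph{by induction on $n$}, in the spirit of Surya and Warnke.

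\begin{enumerate}[label=(\roman*)]
\setcounter{enumi}{3}
\item Define $a_{n,k}=[x^n]h^k$ and $b_{n,k}=\tfrac{k}{n}[x^{n-k}]\phi^n$.
\item For $a$, use $h^k=h^{k-1}\cdot h$ with $h=x\phi(h)=x\sum_j\phi_jh^j$. This gives the recursion
\[
a_{n,k}=\sum_j \phi_j\, a_{n-1,k-1+j}.
\]
Together with $a_{0,0}=1$, this determines all $a_{n,k}$.
\item It remains to check that $b_{n,k}$ satisfies the same recursion and initial values:
\[
\tfrac{k}{n}[x^{n-k}]\phi^n=\sum_j\phi_j\tfrac{k-1+j}{n-1}[x^{n-k-j}]\phi^{n-1}.
\]
\item The sum $\sum_j\phi_j[x^{n-k-j}]\phi^{n-1}$ equals $[x^{n-k}]\phi^n$. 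The sum $\sum_j j\phi_j[x^{n-k-j}]\phi^{n-1}$ equals $[x^{n-k-1}]\phi'\phi^{n-1}=\tfrac1n[x^{n-k-1}](\phi^n)'=\tfrac{n-k}{n}[x^{n-k}]\phi^n$.
\item Substituting these, the right side becomes
\[
\frac{1}{n-1}\Bigl(k-1+\frac{n-k}{n}\Bigr)[x^{n-k}]\phi^n=\frac{k}{n}[x^{n-k}]\phi^n,
\]
which closes the induction.
\end{enumerate}

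The edge cases need separate care: $n=1$, $k=0$ (where $a_{n,0}=0$ for $n>0$, matching $b_{n,0}=0$), and indices with $k>n$. These are handled by the conventions that $b_{0,0}=1$ and that coefficients of negative powers are $0$.

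The induction itself is formal once the recursion is written down. I expect the delicate part to be justifying the recursion for $a_{n,k}$ with the index conventions at the boundary, so that the two sequences genuinely share the same defining recursion.
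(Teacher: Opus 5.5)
Your argument is correct and is essentially the paper's inductive proof (Subsection 3.1). Your recursion $[x^n]\bar{f}^{k}(x)=\sum_j\phi_j[x^{n-1}]\bar{f}^{k-1+j}(x)$, obtained from $\bar{f}^{k}=\bar{f}^{k-1}\cdot x\phi(\bar{f})$, is exactly the paper's step of applying the induction hypothesis to $g(x)=x^{l}\phi(x)$. Your evaluation of $\sum_j j\phi_j[x^{n-k-j}]\phi^{n-1}(x)$ via $\tfrac{1}{n}(\phi^{n})'$ is the same derivative manipulation the paper uses. Your exploratory steps (i)--(iii) play no role and can be dropped. The case $n=1$, where your factor $\tfrac{1}{n-1}$ is undefined, is precisely the base case the paper checks directly.
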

Note that even although division of power series is ill-defined, we still use $x/f(x)$ as the abbreviation for the unique formal power series solution $y(x)$ of $y(x) f(x) = x$. The above statement of LIF with $g(x) = x^l$, $l \geq 0$, implies the next version of LIF we are going to prove as well.
\begin{theorem}\label{thm:SJ}[Schur-Jabotinsky form]
Let $\phi(x)=\sum_{n=0}^\infty\phi_nx^n$ and $f(x)=\sum_{n=0}^\infty f_nx^n$ be formal power series such that $\phi_0\neq 0$, $f_0=0$, $f_1\neq 0$ and $\ov{f}(x) = x \phi(\ov{f}(x))$, where $\ov{f}(x)$ is the compositional inverse of $f(x)$. For any formal power series $g(x)=\sum_{n=0}^\infty g_nx^n$ and any integers $n \geq 1$, $l \geq 0$ such that $n\geq l$,
\begin{equation}\label{LIF_SJ}
[x^n]\ov{f}(x)^l=\tfrac{l}{n}[x^{n-l}](x/f(x))^{n} = \tfrac{l}{n} [x^{n-l}] \phi^n(x).
\end{equation}
\end{theorem}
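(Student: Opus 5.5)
The plan is to prove Theorem \ref{thm:SJ} directly by induction on $n$. The simplest route, deducing it from Theorem \ref{thm:function} with $g(x)=x^l$, is not available to me: that theorem is not yet proved at this point, and I want a self-contained argument. Set $h=\ov{f}$, so $h = x\phi(h)$, and write $a_{n,l}=[x^n]h^l$ and $b_{n,l}=\tfrac{l}{n}[x^{n-l}]\phi^n$.

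\emph{Base cases.} For $l=0$ both sides vanish when $n\ge 1$. For $l=n$ we have $[x^n]h^n=h_1^n=\phi_0^n$, since $h_1=\phi_0$, and $\tfrac{n}{n}[x^0]\phi^n=\phi_0^n$ as well.

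\emph{Recurrence for $a$.} Take the $l$-th power of the defining equation to get $h^l=x^l\phi(h)^l$. Differentiating, or equivalently comparing coefficients through $\phi(h)=h/x$, should yield a linear recursion expressing $a_{n,l}$ through the $a_{m,k}$ with $m<n$. Concretely, write $\phi(y)^l=\sum_k c^{(l)}_k y^k$ with $c^{(l)}_k=[y^k]\phi^l$. Then
\[
a_{n,l}=[x^{n-l}]\phi(h)^l=\sum_{k} c^{(l)}_k\, a_{n-l,k}.
\]
When $l\ge1$ this involves only first indices $n-l<n$, so the induction on $n$ applies, with the $k=0$ term contributing $c^{(l)}_0$ only if $n=l$.

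\emph{Matching the $b$ side.} It remains to check that
\[
b_{n,l}=\sum_{k=0}^{n-l} [y^k]\phi^l\cdot \tfrac{k}{n-l}[x^{n-l-k}]\phi^{n-l}.
\]
Set $m=n-l$. The right side equals $\tfrac1m[x^{m}]\big(x(\phi^l)'\big)\phi^{m}$, because $\sum_k k\,c_k^{(l)}x^k=x(\phi^l)'$. This simplifies to $\tfrac{l}{m}[x^{m-1}]\phi'\phi^{n-1}$. By the identity $[x^{m-1}]\phi'\phi^{n-1}=\tfrac1n[x^{m-1}](\phi^n)'=\tfrac{m}{n}[x^m]\phi^n$, the right side becomes $\tfrac{l}{n}[x^{m}]\phi^n=b_{n,l}$, as required. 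The case $m=0$ is the base case $l=n$.

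The main obstacle is this final coefficient identity, namely recognizing $x(\phi^l)'\phi^{m}$ as $l\,x\,\phi'\phi^{n-1}$ and converting it to a derivative of $\phi^n$, together with the bookkeeping of the $k=0$ term. That term has $k=0$, so its weight $\tfrac{k}{m}$ vanishes, which is consistent with $a_{m,0}=0$ for $m\ge1$. The equality with the middle expression $(x/f(x))^n$ follows from $\phi(x)=x/f(x)$, which holds because $\phi(h)=h/x$, and substituting $x\mapsto f(x)$ gives $\phi(x)=x/f(x)$.
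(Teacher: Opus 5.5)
Your argument is correct, but it does not follow the paper's proof of Theorem \ref{thm:SJ}.

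The paper's proof in Subsection \ref{proof2} uses no induction. It differentiates $(\ov{f}^l\circ f)(x)=x^l$ term by term, writes $f=x\hat{f}$, and multiplies through by $\hat{f}^{-n}$. It then reads off $[x^{n-l}]$ using Lemma \ref{lm:1}, the identity $[x^s](x^jg^{j-s}+x^{j+1}g^{j-s-1}g')=\delta_{j,s}$, which stands in for ``a derivative has no residue.'' This lands directly on the $x/f(x)$ form, at the cost of negative powers of $\hat{f}$ and a small technical lemma.

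You stay with $\phi$ throughout:
\begin{itemize}
\item you derive the recursion $[x^n]h^l=\sum_k [y^k]\phi^l\,[x^{n-l}]h^k$ from $h^l=x^l\phi(h)^l$ and the right distributive law;
\item you run a course-of-values induction on $n$;
\item you close the step with the power rule and $[x^{m-1}]F'=m[x^m]F$;
\item you recover the $x/f$ form afterwards from $\phi f=x$, obtained by composing with $f$.
\end{itemize}

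Your route also differs from the paper's inductive proof in Subsection \ref{proof1}. That proof peels off a single factor, $\ov{f}^{l+1}=\ov{f}^{l}\cdot x\phi(\ov{f})$, and applies the functional form at level $n$ with $g(x)=x^l\phi(x)$, so a plain step $n\to n+1$ suffices. Peeling off all $l$ factors, as you do, amounts to applying the functional form at level $n-l$ with $g=\phi^l$, expanded inline by linearity. This is exactly the Surya--Warnke choice the paper contrasts its own with. It forces you to reach back to level $n-l$ rather than $n$. Because you run the induction over all $l$ at once, it remains a single strong induction with no nesting.
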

Theorem \ref{thm:SJ} implies Theorem \ref{thm:function}, since multiplying Equation (\ref{LIF_SJ}) by $g_l$ and summing over $l\geq 0$, we easily recover the original Equation \eqref{LIF_Functional} for any formal power series $g$ and any fixed positive integer $n$.

Let us notice that although we decided to explicitly assume $\phi_0\neq 0$, $f_0=0$, $f_1\neq 0$ it turns out that this is somewhat excessive (see \cite{surya2023lagrange} for a discussion on this).

\section{Notation, basic facts and assumptions}
Let $\N$ denote the set of positive integers and set $\N_0:=\N\cup\{0\}$. For a formal power series $f(x)$, its $n$-th coefficient is $f_n$ or $[x^n]f(x)$, $n\in \N_0$.

The product of two formal power series $f(x)$ and $g(x)$ is the formal power series $h(x)$ such that $h_n=\sum_{i=0}^nf_ig_{n-i}$, $n\in \N_0$. We denote the product of $f(x)$ and $g(x)$ by $f(x)g(x)$ or $(fg)(x)$. In the paper we consider formal power series with coefficients in an integral domain $\K$ (e.g. $\K\in\{\Q, \R, \C \}$); this assumption guarantees that the product is associative and commutative and for any pair of formal power series $f(x)$ and $g(x)$ their product is zero formal power series $\theta$ (every coefficient of $\theta$ equals $0\in \K$) if and only if either $g(x)=\theta$ or $f(x)=\theta$. The order of a non-zero formal power series $f(x)$ is the number $\ord f:=\min\{n\in \N_0:\, f_n\neq 0\}$; $\ord \theta:=+\infty$. It is known that the order of the product $f(x)g(x)$ equals the sum of orders of $f$ and $g$.   

For any $l\in \N_0$, $x^l$ denotes a formal power series whose all coefficients are $0$, except the coefficient $l$ which is $1\in \K$. The product of $k$ copies of a formal power series $f(x)$ ($k$-th power of $f$) is denoted by $f^k(x)$, $k\in \N$; $f^0(x):=x^0$.  A formal power series $g(x)$ possesses multiplicative inverse, that is, there exists a formal power series $f(x)$ such that $f(x)g(x)=x^0$ if and only if $g_0\neq 0$. Any formal power series $f(x)$ with $f_0=0$ and $f_1\neq 0$ is called almost unit. Any almost unit formal power series $f(x)$ possesses its compositional inverse $\ov{f}(x)$ (which is also almost unit), that is, it holds $f(\ov{f}(x))=\ov{f}(f(x))=x$. Here by $g(f(x))$ or, equivalently, $(g\circ f)(x)$, we mean the composition of a formal power series $g(x)$ with a formal power series $f(x)$, where $f(x)$ is a nonunit, $f_0=0$, that is, a formal power series $(g\circ f)(x)$ which is defined by $[x^n](g\circ f)(x):=\sum_{i=0}^\infty g_i[x^n]f^i(x)=\sum_{i=0}^n g_i[x^n]f^i(x)$, $n\in \N_0$; in this case we also write $g(f(x))=\sum_{i=0}^\infty g_if^i(x)$. Note that in order the infinite summation to be well-defined in the context of formal power series, we require that every infinite series must contain only finite number of non-zero terms for every $n$ (sequence admitting addition \cite{Niven}). This makes the composition formula written above well-defined since only first $n$ terms contribute to $x^n$ coefficient.

The derivative of a formal power series $f(x)$ is $f'(x)$ (or, equivalently, $(f(x))'$) defined by $[x^n]f'(x):=(n+1)[x^{n+1}]f(x)$, $n\in \N_0$. In our proofs we shall use the following properties of formal power series calculus. Let $f(x)$ and $g(x)$ be formal power series. Then, for any nonunit formal power series $h(x)$, 
\begin{enumerate}
\item product rule: $(fg)'(x)=f'(x)g(x)+f(x)g'(x)$,
\item power rule: $(f^{k+1})'(x)=(k+1)f^{k}(x) f'(x)$, $k\in \N_0$,
\item chain rule: $(f\circ h)'(x)=(f'\circ h)(x)h'(x)$ ,
\item term-by-term differentiation: $[x^n](f\circ h)'(x)=[x^n](f_0(h^0)'(x)+f_1(h^1)'(x)+f_2(h^2)'(x)+\ldots)$, $n\in \N_0$,
\item right distributive law: $((fg)\circ h)(x)=(f\circ h)(x)(g\circ h)(x)$.

\end{enumerate}
The notions and properties of formal power series presented in this section can be found in the classic work \cite{Niven} or in \cite[Chapter 1]{Henrici} or in a recent book \cite{g21}. A more general approach to the right distribute law is presented in \cite{BGM}.

\section{Proofs of LIF}
\subsection{Inductive approach to LIF}\label{proof1} The following proof is an adaptation of Surya and Warnke's proof \cite{surya2023lagrange} and exploits the interrelationships between the functional and Schur-Jabotinsky forms of the Lagrange inversion formula. 

\begin{proof}[Proof of Theorem \ref{thm:function}]
We proceed by induction, keeping in mind that as soon we prove it for a certain $n$ for any $g(x)$ (functional), it is immediately valid for $g(x) = x^l$ for any $l$ (Schur-Jabotinsky form) and vice versa. First, we show that LIF holds for $n=1$. We use its functional form. The RHS of Equation \eqref{LIF_Functional} gives
\begin{equation*}
    [x^0] g'(x) \phi^n(x) = g'(0) \phi(0) = g_1 \phi_0,
\end{equation*}
while the left--hand side equals
\begin{equation*}
    [x^1] g(\ov{f}(x)) = [x^1] (g_0 + g_1 \ov{f}(x) + g_2 \ov{f}^2(x) + \cdots) = [x^2] (g_0 + g_1 x \phi(\ov{f}(x)) + g_2 x^2 \phi^2(\ov{f}(x)) + \cdots) = g_1 \phi_0.
\end{equation*}
Next, let's assume LIF holds for a fixed integer $n$ (both of its forms), we show it implies that it also holds for $n+1$ and all $l$ in its Schur-Jabotinsky form (from which follows the functional form). We successively transform the left--hand side of Equation \eqref{LIF_SJ} -- for $n+1$ (and any $l$ fixed), it equals
\begin{equation*}
    [x^{n+1}] \ov{f}^{l+1}(x).
\end{equation*}
Writing $\ov{f}^{l+1}(x)$ as $\ov{f}(x) \ov{f}^l(x)$ and replacing $\ov{f}(x)$ by $x \phi(\ov{f}(x))$, we get the left--hand side of Equation \eqref{LIF_SJ} expressed as
\begin{equation*}
[x^{n+1}] \ov{f}^l(x) x \phi(\ov{f}(x))
\end{equation*}
or, equivalently,
\begin{equation*}
[x^n] \ov{f}^l(x) \phi(\ov{f}(x)).
\end{equation*}
However, taking the $n$-th term can be done by LIF itself (it is assumed it holds for $n$). Setting $g(x) = x^l \phi(x)$ in Equation \eqref{LIF_Functional} yields
\begin{equation*}
    \tfrac{1}{n} [x^{n-1}] (x^l \phi(x))' \phi^n(x).
\end{equation*}
Writing out the derivative, we get
\begin{equation*}
\tfrac{1}{n} [x^{n-1}](l x^{l-1} \phi^{n+1}(x) + x^l \phi'(x) \phi^n(x)),
\end{equation*}
which, after expanding the coefficient extraction operator and grouping the second term under a single derivative, gives the following expression 
\begin{equation*}
\tfrac{1}{n} \left( l [x^{n-l}] \phi^{n+1}(x) + [x^{n-l-1}] \tfrac{1}{n+1} (\phi^{n+1}(x))' \right).
\end{equation*}
Finally, simplifying the second term via product rule, we get
\begin{equation*}
\tfrac{1}{n} \left( l [x^{n-l}]\phi^{n+1}(x) + \tfrac{n-l}{n+1} [x^{n-l}] \phi^{n+1}(x) \right)
\end{equation*}
which is equivalent to
\begin{equation*}
\tfrac{l+1}{n+1} [x^{n-l}] \phi^{n+1}(x),
\end{equation*}
but this is the right--hand side of Equation \eqref{LIF_SJ}. The proof is now finished.
\end{proof}

We briefly discuss the similarity of our proof with the one by Surya and Warnke \cite{surya2023lagrange}. In their proof, they select $g(x) = \phi(x)^l$, which results in a nested induction from $l$ to $n$ in the inductive step. However, taking $g(x) = x^l \phi(x)$ as above enabled us to use the induction directly from $n$ to $n + 1$.

\subsection{Calculus of formal power series approach to LIF}\label{proof2} The below proof uses the right distributive law, basic formulas of formal power series calculus and properties of order of a formal power series we introduced in section 2 of the paper. However, it also needs a bit technical (though simple) lemma. 
\begin{lem}\label{lm:1}
Let $g=\sum_{n=0}^\infty g_n x^n$ be a multiplicatively invertible formal power series, that is, $g_0\neq 0$. For every $j,s\in \N_0$, it holds 
$$[x^s] (x^jg^{-s+j}(x)+x^{j+1}g^{-s+(j-1)}(x)g'(x))=\delta_{j,s},$$
where $\delta_{j,s}$ denotes the Kronecker delta.
\end{lem}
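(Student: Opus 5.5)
The plan is to set $m:=s-j$ and split into three cases according to the sign of $m$. Throughout, $g^{-k}$ for $k\in\N$ denotes the $k$-th power of the multiplicative inverse of $g$, which exists because $g_0\neq 0$. Every power $g^{r}$ with $r\in\mathbb Z$ therefore has order $0$. I use only the product and power rules and the additivity of $\ord$ under multiplication from Section 2.

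\emph{Case $s<j$.} The first term $x^jg^{j-s}(x)$ has order $j>s$. The second term $x^{j+1}g^{j-s-1}(x)g'(x)$ is either $\theta$ or has order at least $j+1>s$. So both coefficients at $x^s$ vanish, and the sum is $0=\delta_{j,s}$.

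\emph{Case $s=j$.} The first term is $x^jg^0(x)=x^j$, whose $x^s$-coefficient is $1$. The second term has order at least $j+1>s$ and contributes nothing. The sum is $1=\delta_{j,s}$.

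\emph{Case $s>j$, i.e.\ $m\geq 1$.} Shifting by $x^j$ gives
$$[x^s]\bigl(x^jg^{-m}(x)+x^{j+1}g^{-m-1}(x)g'(x)\bigr)=[x^m]\bigl(h(x)+x\,g^{-m-1}(x)g'(x)\bigr),\qquad h:=g^{-m}.$$
The key step is the identity $h'(x)=-m\,g^{-m-1}(x)g'(x)$. To prove it, differentiate $g^m(x)h(x)=x^0$ using the product rule and the power rule for $g^m$:
$$m\,g^{m-1}g'\,h+g^m h'=0.$$
Multiplying by $g^{-m}$ gives $h'=-m\,g^{-1}g'h=-m\,g^{-m-1}g'$.

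Substituting this, the coefficient becomes
$$[x^m]\Bigl(h(x)-\tfrac{1}{m}\,x\,h'(x)\Bigr)=h_m-\tfrac{1}{m}\,[x^{m-1}]h'(x)=h_m-\tfrac{1}{m}\,m\,h_m=0=\delta_{j,s},$$
using the definition $[x^{m-1}]h'=m\,h_m$. Dividing by $m$ is harmless: if $\K$ is only an integral domain, one can instead multiply the expression by $m$ and note that $m h_m - m h_m=0$ already appears at the coefficient level.

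The main obstacle is this third case, specifically justifying the derivative formula for negative powers. The power rule in Section 2 is stated only for nonnegative exponents, so it has to be derived from the product rule via $g^m g^{-m}=x^0$, as above. The remaining steps are routine order and coefficient bookkeeping.
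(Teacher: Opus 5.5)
Your proof is correct and is essentially the paper's: the cases $j\ge s$ are settled by orders, and for $j<s$ you compute $[x^{m-1}](g^{-m})'$ once from the definition of the derivative and once via the power rule, exactly as the paper does. Your derivation of $(g^{-m})'=-m\,g^{-m-1}g'$ from $g^m g^{-m}=x^0$ usefully fills in a step the paper leaves implicit, since Section 2 states the power rule only for nonnegative exponents. One small caveat: in your integral-domain remark, cancelling $m$ from $m\cdot(\text{expression})=0$ requires $m\cdot 1_{\K}\neq 0$ (e.g.\ characteristic $0$), which is the same tacit assumption the paper makes when it cancels $s-j$.
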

\begin{proof}
Since the order of the formal power series $x^jg^{-s+j}(x)$ is $j$ and the order of $x^{j+1}g^{-s+(j-1)}(x)g'(x)$ is not less than $j+1$ (it may be $+\infty$), the claim is true for $j\geq s$.

Suppose that $j<s$. We have $[x^{s-j-1}](g^{-s+j})'(x)=(s-j)[x^{s-j}]g^{-s+j}(x)$ (by definition of the formal derivative) and, at the same time, $[x^{s-j-1}](g^{-s+j})'(x)=(-s+j)[x^{s-j-1}](g^{-s+j-1}(x)g'(x))$ (by power rule). Hence, $[x^{s-j}]g^{-s+j}(x)+[x^{s-j-1}](g^{-s+j-1}(x)g'(x))=0$ which is equivalent to $[x^s](x^jg^{-s+j}(x)+x^{j+1}g^{-s+j-1}(x)g'(x))=0$. The lemma is proved.
\end{proof}

\begin{proof}[Proof of Theorem \ref{thm:SJ}]
Direct calculations show that the claim is true for $n=l>0$ or $l=0$ and we may assume that $n>l>0$.\\
Let $\ov{f}(x)$ be the compositional inverse of $f(x)$. Then, by the right distributive law,  $(\ov{f}^l\circ f)(x)=(\ov{f}\circ f)^l(x)=x^l$. Hence, $(\ov{f}^l\circ f )'(x)=lx^{l-1}$. On the other hand, by definition of composition, we have $(\ov{f}^l\circ f)(x)=\ov{f}^l_{l}f^{l}(x)+\ov{f}^l_{l+1}f^{l+1}(x)+\ldots$, where $\ov{f}^l_{n}:=[x^n]\ov{f}^l (x)$, and, subsequently, $$(\ov{f}^l\circ f)'(x)=\sum_{i=l}^\infty i\ov{f}^{l}_i f^{i-1}(x)f'(x).$$
Thus, 
\begin{equation}\label{eq:1}\sum_{i=k}^\infty i\ov{f}^{l}_i f^{i-1}(x)f'(x)=lx^{l-1}.
\end{equation}
Now, let $\hat{f}(x)$ be a formal powers series defined by $[x^n]\hat{f}(x):=[x^{n+1}]f(x)$ for all $n\in \N_0$, that is, $\hat{f}(x)$ is obtained from $f$ by backshifting the coefficients of $f$. Since $f(x)=x\hat{f}(x)$ and $f'(x)=\hat{f}(x)+x\hat{f}'(x)$, formula (\ref{eq:1}) can be written as 
$$\sum_{i=l}^\infty i\ov{f}^{l}_ix^{i-1}(\hat{f}^{i}(x)+x\hat{f}^{i-1}(x)\hat{f}'(x))=lx^{l-1}.$$
But the left--hand side of the above equation is $$x^{l-1}\sum_{i=l}^\infty i\ov{f}^{l}_ix^{i-l}(\hat{f}^{i}(x)+x\hat{f}^{i-1}(x)\hat{f}'(x)),$$
so we get 
$$\sum_{i=l}^\infty i\ov{f}^{l}_ix^{i-l}(\hat{f}^{i}(x)+x\hat{f}^{i-1}(x)\hat{f}'(x))=lx^0.$$
Multiplying the last equality by $\hat{f}^{-n}(x)$ we arrive at 
$$\sum_{i=l}^\infty i\ov{f}^{l}_i(x^{i-l}\hat{f}^{i-n}(x)+x^{i-l+1}\hat{f}^{i-n-1}(x)\hat{f}'(x))=l\hat{f}^{-n}(x).$$
By Lemma \ref{lm:1} (with $g$ and $s$ replaced with $\hat{f}$ and $n-l$, respectively), for $j=0,1,\ldots,n-l-1$, we have $[x^{n-l}](x^{j}\hat{f}^{l-n+j}(x)+x^{j+1}\hat{f}^{l-n+j-1}(x)\hat{f}'(x))=0$. Moreover, $\ord(x^{n-l+1}\hat{f}^{-1}(x)\hat{f}'(x))=n-l+1$ and $\ord(x^{m-k}g(x))>n-l$ for $m>n$ and any formal power series $g$. Hence, 
$$[x^{n-l}]\sum_{i=l}^\infty i\ov{f}^{l}_i(x^{i-l}\hat{f}^{i-n}(x)+x^{i-l+1}\hat{f}^{i-n-1}(x)\hat{f}'(x))=[x^{n-l}]n\ov{f}^l_n x^{n-l}=n\ov{f}^l_n=n[x^n]\ov{f}^l(x).$$
Since $[x^{n-l}]l\hat{f}^{-n}(x)=l[x^{n-l}](x/f(x))^{n}$, the proof is completed.
\end{proof}

\end{document}